\def\ps@pprintTitle{%
	\let\@oddhead\@empty
	\let\@evenhead\@empty
	\def\@oddfoot{}%
	\let\@evenfoot\@oddfoot}
\newtheorem{theorem}{Theorem}
\newtheorem{proposition}{Proposition}
\begin{document}
	
\begin{frontmatter}
	
\title{Error analysis of an accelerated interpolative decomposition for 3D Laplace 
	problems}

%% Group authors per affiliation:
\author[affi]{Xin Xing}
\ead{xxing33@gatech.edu}

\author[affi]{Edmond Chow}
\ead{echow@cc.gatech.edu}

\address[affi]{School of Computational Science and Engineering, 
	Georgia Institute of Technology, Atlanta, GA}
	
\begin{abstract}
	In constructing the $\mathcal{H}^2$ representation of dense matrices defined by the Laplace kernel,
the interpolative decomposition of certain off-diagonal submatrices that dominates the computation
can be dramatically accelerated using the concept of a proxy surface. 
We refer to the computation of such interpolative decompositions as the proxy surface method. 
We present an error bound for the proxy surface method in the 3D case and thus provide theoretical  
guidance for the discretization of the proxy surface in the method. 
%We present a detailed error analysis of this method for the 3D Laplace kernel to show the accuracy of the method and 
%also to provide theoretical guidance for the discretization of the proxy surface in the method. 

\end{abstract}
	
\begin{keyword}
	interpolative decomposition \sep proxy surface \sep Laplace kernel
		
	\MSC[2010] 65G99
\end{keyword}
\end{frontmatter}

%\title{Error analysis of an accelerated interpolative decomposition for 3D Laplace 
%	problems
%	\thanks{Version of \today.}}
%
%\author{
%	Xin Xing%
%	\thanks{School of Computational Science and Engineering, Georgia Institute of Technology,
%		Atlanta, GA (\email{xxing33@gatech.edu}, \email{echow@cc.gatech.edu}).}
%	\and
%	Edmond Chow%
%	\footnotemark[2]
%}
%
%\headers{Error analysis of the proxy surface method}
%{Xin Xing and Edmond Chow}

%\begin{document}

%\maketitle

\begin{abstract}

\end{abstract}

%\keywords{interpolative decomposition \sep proxy surface \sep Laplace kernel}

%\linenumbers
\section{Introduction}
$\mathcal{H}^2$ matrix techniques 
\cite{chandrasekaran_fast_2006, hackbusch_data-sparse_2002,hackbusch_$mathcalh^2$-matrices_2000}
can accelerate dense matrix-vector multiplications and also provide efficient direct solvers for many types 
of dense kernel matrices arising from the discretization of integral equations. 
However, these benefits are based on a rather expensive $\mathcal{H}^2$ matrix construction cost.
Given a kernel function $K(x,y)$, the main bottleneck of $\mathcal{H}^2$ 
construction using interpolative decomposition (ID) 
\cite{cai_difeng_smash:_2017, ho_fast_2012, martinsson_fast_2011} is the ID approximation
of certain kernel submatrices of the form $K(X_0, Y_0)$ where point set $X_0$ lies in a bounded domain  
$\mathcal{X}$ and point set $Y_0$ lies in the far field of $\mathcal{X}$, denoted as $\mathcal{Y}$,
with $Y_0$ usually much larger than $X_0$. A 2D example of $X_0$ and $Y_0$ is shown in \cref{fig:proxy_surface}. 

\begin{figure}[ht]
	\centering
	\includegraphics[width=0.75\textwidth]{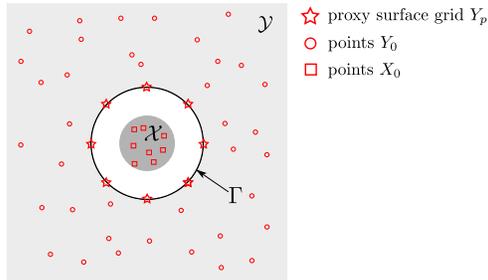}
	\caption{Illustration of the proxy surface method. The matrix to be directly compressed is $K(X_0, Y_p)$,
		rather than $K(X_0, Y_0)$, with a fixed number of columns $|Y_p|$, 
		regardless of how many points $Y_0$ there are in $\mathcal{Y}$.\label{fig:proxy_surface}}
\end{figure}

An algebraic approach to obtain these IDs usually leads to a prohibitive quadratic $\mathcal{H}^2$ construction cost. 
For the Laplace kernel, Martinsson and Rohklin \cite{martinsson_fast_2005} efficiently obtained an 
ID of $K (X_0, Y_0)$ by using the concept of a proxy surface and this 
concept is also used in recursive skeletonization by Ho and Greengard \cite{ho_fast_2012}.
The key idea, as illustrated in \cref{fig:proxy_surface}, is to convert the problem into the ID approximation of a kernel matrix 
$K(X_0, Y_p)$, where point set $Y_p$ is selected to discretize the interior boundary of 
$\mathcal{Y}$, with $Y_p$ much smaller than $Y_0$ in practice. 
The interior boundary, denoted as $\Gamma$, is called a proxy surface in 
\cite{ho_fast_2012} and thus we refer to the method as the \textit{proxy surface method}.
With this method, the $\mathcal{H}^2$ construction cost can be reduced to linear complexity.
It is worth noting that kernel independent FMM \cite{ying_kernel-independent_2004} and the 
proxy point method \cite{xin_proxy_2018} are also based on similar ideas. 

The error analysis of the proxy surface method, however, is only briefly discussed in 
\cite{martinsson_fast_2005} without much detail and the selection of $Y_p$ 
to discretize the proxy surface is heuristic in previous applications 
\cite{martinsson_fast_2005, ho_fast_2012, kong_adaptive_2011, corona_on_2015}. 
In this paper, we provide a detailed error analysis of the proxy surface method 
for the 3D Laplace kernel. 
The error analysis shows that, under certain conditions, it is sufficient to discretize proxy surfaces of 
different sizes using a constant number of points while maintaining a fixed accuracy in the method.

\section{Background}
Given a matrix $A\in \mathbb{R}^{n\times m}$, a rank-$k$ interpolative decomposition (ID) 
\cite{cheng_compression_2005, gu_efficient_1996} of $A$ is of the form
$U A_J$ where $A_J \in \mathbb{R}^{k\times m}$ is a row subset of $A$ 
and $U \in \mathbb{R}^{n\times k}$ has bounded entries. We call $A_J$ and $U$ the skeleton
and projection matrices, respectively. The ID is said to have
precision $\varepsilon$ if the norm of each row of the error matrix $A - U A_J$ is bounded 
by $\varepsilon$. Using an algebraic approach, the ID can be calculated based on the strong rank-revealing 
QR (sRRQR) \cite{gu_efficient_1996} of $A^T$ with entries of the obtained $U$ bounded by
 a pre-specified parameter $C \geqslant 1$.

Take the domain pair $\mathcal{X} \times \mathcal{Y}$ and the interior boundary $\Gamma$ of 
$\mathcal{Y}$  shown in \cref{fig:proxy_surface} as an example. 
For the Laplace kernel $K(x,y)$ and any point sets $X_0 \subset \mathcal{X}$ and $Y_0 \subset \mathcal{Y}$,
we now explain the proxy surface method for the ID approximation of $K(X_0,Y_0)$, based on the discussion 
from \cite{martinsson_fast_2005}.

By Green's Theorem, the potential at any $x \in \mathcal{X}$ generated by source point set $Y_0$ with charges
$F = (f_i)_{y_i \in Y_0}$ can also be generated by an equivalent charge distribution on the proxy surface 
$\Gamma$ that encloses $\mathcal{X}$. Select a point set $Y_p$ uniformly distributed on $\Gamma$ to 
discretize the equivalent charge distribution with point charges $\tilde{F} = (\tilde{f_i})_{y_i \in Y_p}$ at 
$Y_p$. It is shown in \cite{martinsson_fast_2005} that 
\begin{equation*}
\tilde{F} \approx W_{Y_0,Y_p}F,
\end{equation*}
where $W_{Y_0,Y_p}$ is a discrete approximation of the linear operator that maps charges $F$ at $Y_0$ to an 
equivalent charge distribution on $\Gamma$ with $\|W_{Y_0,Y_p}\|_2$ bounded as a consequence of Green's Theorem. 
Matching the potentials induced by $F$ and by $\tilde{F}$ at any $x \in \mathcal{X}$ gives 
$K(x, Y_0)F \approx K(x, Y_p) W_{Y_0, Y_p}F$ and thus it holds that 
\begin{equation}\label{eqn:equi_potential}
K(x, Y_0) \approx K(x, Y_p) W_{Y_0, Y_p}, \quad \forall x \in \mathcal{X},
\end{equation}
where $K(x, Y_0) = (K(x, y_i))_{y_i \in Y_0}$ and $K(x, Y_p) = (K(x, y_i))_{y_i \in Y_p}$ are both row vectors. 
Substituting $X_0 \subset \mathcal{X}$ into \cref{eqn:equi_potential}, the target matrix $K(X_0, Y_0)$ 
can be approximated as $K(X_0, Y_p) W_{Y_0,Y_p}$. Find an ID of $K(X_0, Y_p)$ using sRRQR as 
\begin{equation}\label{eqn:proxy}
K(X_0, Y_p) \approx UK(X_\text{rep},Y_p),
\end{equation}
where $X_\text{rep}\subset X_0$ denotes the ``representative'' point subset associated
with the selected row subset in the skeleton matrix of the ID with $U$ being the projection 
matrix of the ID. The proxy surface method then defines the ID of $K(X_0, Y_0)$ as 
\begin{equation}\label{eqn:target}
K(X_0, Y_0) \approx U K(X_\text{rep}, Y_0).
\end{equation}
The error of the above approximation can be bounded as
\begin{align}
\|K(X_0, Y_0) - U K(X_\text{rep}, Y_0)\|_F  
 &\approx \|\left(K(X_0, Y_p) - U K(X_\text{rep}, Y_p)\right)W_{Y_0,Y_p}\|_F \nonumber \\
 &\leqslant \|K(X_0, Y_p) - U K(X_\text{rep}, Y_p)\|_F \|W_{Y_0,Y_p}\|_2, \nonumber % \label{eqn:error_bound_old}
\end{align}
and thus the error is controlled by the error of the ID in \cref{eqn:proxy}.

The number of points in $Y_p$ used to discretize $\Gamma$ is usually chosen heuristically. 
Ref.\ \cite{martinsson_fast_2005, kong_adaptive_2011} suggest using $|Y_p|\sim O(|X_0|)$ and \cite{ho_fast_2012} claims correctly 
but without an explanation that for the Laplace kernel, 
proxy surfaces of different sizes can be discretized using a constant number of points and this constant only depends 
on the compression precision. 

In this paper, to theoretically justify the proxy surface method, we address the following two problems: 
(a) the quantitative relationship between the errors of the two IDs in \cref{eqn:proxy} and \cref{eqn:target} 
and (b) how to choose the number of points in $Y_p$ to guarantee the accuracy of the proposed ID in 
\cref{eqn:target}. 

\section{Main result}
We focus on the proxy surface method for the 3D Laplace kernel $K(x,y) = 1/|x-y|$.
Denote the open ball of radius $r$ centered at the origin as $B(0, r)$. 
For conciseness of the error analysis, we consider $\mathcal{X} = B(0, r_1)$,
$\mathcal{Y} = \mathbb{R}^3 \backslash B(0, r_2)$, and $\Gamma = \partial B(0, r_2)$ 
with $r_2 > r_1$, as illustrated in \cref{fig:domain}. 
Assume a point set $Y_p$ has been selected to discretize $\Gamma$ and the target
kernel matrix $K(X_0, Y_0)$ is associated with point sets $X_0\subset \mathcal{X}$ and $Y_0 \subset \mathcal{Y}$. 

\begin{figure}[ht]
	\centering
	\includegraphics[width=0.50\textwidth]{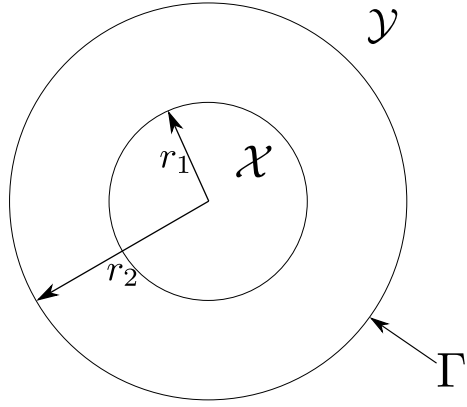}
	\caption{2D illustration of the 3D domain pair $\mathcal{X}\times\mathcal{Y}$ and the proxy 
		surface $\Gamma$.\label{fig:domain}}
\end{figure}

In the proxy surface method, the proposed ID in \cref{eqn:target} can be 
viewed row-by-row as
\[
  K (x_i, Y_0) \approx u_i^T K (X_\text{rep}, Y_0), \quad x_i \in X_0,
\]
where $u_i^T$ denotes the $i$th row of $U$. 
Since the above approximation can be applied to any point set $Y_0$ in $\mathcal{Y}$,
its error is intrinsically based on the function approximation
\begin{equation*}
  K (x_i, y) \approx u_i^T K (X_\text{rep}, y), \quad x_i \in X_0,\ y \in
  \mathcal{Y}.
\end{equation*}
Denote the error of this function approximation by the scalar function
\begin{equation}\label{def:error}
  e_i (y) = K (x_i, y) - u_i^T K (X_\text{rep}, y), \quad x_i \in X_0,\ y \in
  \mathcal{Y}.
\end{equation}
In other words, $e_i(y)$ is the error in the approximation of the interaction 
between $x_i$ and some $y \in \mathcal{Y}$. 
Using this notation, the error of the $i$th row of the approximations \cref{eqn:proxy} and 
\cref{eqn:target} can be denoted as $e_i (Y_0)$ and $e_i (Y_p)$, respectively, which are row vectors. 
In the following discussion, we assume that the ID \cref{eqn:proxy} of $K(X_0, Y_p)$ has precision
$\varepsilon\sqrt{|Y_p|}$ and thus  $\|e_i(Y_p)\|_2 \leqslant \varepsilon\sqrt{|Y_p|}$.

For $Y_0 \subset \mathcal{Y}$ with an arbitrary point distribution, the best upper bound 
for $e_i(Y_0)$ is 
\begin{equation}\label{eqn:maxbound}
  \| e_i (Y_0) \|_2 \leqslant \sqrt{| Y_0 |} \max_{y \in \mathcal{Y}} |e_i (y)| .
\end{equation}
Our error analysis of the proxy surface method seeks an upper bound for 
$|e_i(y)|$ in the whole domain $\mathcal{Y}$ under the condition that 
$\|e_i(Y_p)\|_2 \leqslant \varepsilon \sqrt{|Y_p|} $. In fact, we can prove the following proposition. 
\begin{proposition}\label{prop}
	If point set $Y_p\subset \Gamma$ satisfies the condition that numerical quadrature with 
	the points in $Y_p$	and equal weights $\frac{4\pi r_2^2}{|Y_p|}$ is exact for polynomials
	on $\Gamma$ of degree up to $2c$ where $c$ is an integer constant, then $e_i(y)$ for any $x_i \in X_0$ can be bounded as 
	\begin{equation}\label{eqn:bound}
	| e_i (y) | \leqslant (c + 1) \frac{\| e_i (Y_p) \|_2}{\sqrt{| Y_p |}} +
	(c + 2) \frac{(1 + | X_\textnormal{rep} | \| u_i \|_{\infty})}{r_2 - r_1}
	\left( \frac{r_1}{r_2} \right)^{c + 1}\!\!, \ y \in \mathcal{Y}.
	\end{equation}
%	Thus, the ID approximation \cref{eqn:target} proposed by the proxy surface method
%	has average entry-wise error at each row bounded as
%	\begin{equation}\label{eqn:boundvector}
%	\frac{\|e_i(Y_0)\|}{\sqrt{|Y_0|}} \leqslant (c + 1) \frac{\| e_i (Y_p) \|_2}{\sqrt{| Y_p |}} +
%	(c + 2) \frac{(1 + | X_\text{rep} | \| u_i \|_{\infty})}{r_2 - r_1}
%	\left( \frac{r_1}{r_2} \right)^{c + 1}\!, \ x_i \in X_0.
%	\end{equation}
\end{proposition}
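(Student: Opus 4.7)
The plan is to split the error function $e_i(y)$ into a low-frequency and a high-frequency part using the multipole expansion of the Laplace kernel, and to bound the two parts by very different mechanisms. Concretely, I would write $\frac{1}{|x-y|} = \sum_{n=0}^{\infty} \frac{|x|^n}{|y|^{n+1}} P_n(\cos\theta)$ for $|x|\leq r_1 < r_2 \leq |y|$, define $K^{\mathrm{low}}(x,y)$ as the truncation at degree $c$ and $K^{\mathrm{high}}(x,y)$ as its tail, and propagate the splitting through the linear combination defining $e_i$ to obtain $e_i = e_i^{\mathrm{low}} + e_i^{\mathrm{high}}$.

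The high-frequency part is straightforward. Using $|P_n(\cos\theta)|\leq 1$ and the geometric series identity $\sum_{n=c+1}^{\infty} r_1^n/r_2^{n+1} = (r_1/r_2)^{c+1}/(r_2-r_1)$, then applying the triangle inequality across the combination defining $e_i^{\mathrm{high}}$, gives the pointwise estimate
\[
|e_i^{\mathrm{high}}(y)| \leq (1 + |X_\textnormal{rep}|\|u_i\|_{\infty}) \, \frac{(r_1/r_2)^{c+1}}{r_2-r_1} \qquad \forall\, y \in \mathcal{Y}.
\]
This directly accounts for one unit of the factor $c+2$ in the second term of the proposition, and as a by-product yields a discrete bound $\|e_i^{\mathrm{high}}(Y_p)\|_2 \leq \sqrt{|Y_p|}$ times the same constant.

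The low-frequency part is the heart of the argument. The key observation is that, restricted to $\Gamma$, $e_i^{\mathrm{low}}$ is, via the addition theorem applied term-by-term, a spherical-harmonic polynomial of degree at most $c$ in the angular variable $\hat{y}$. Since its squared modulus is then a polynomial on $\Gamma$ of degree at most $2c$, the quadrature hypothesis gives the discrete Parseval identity
\[
\|e_i^{\mathrm{low}}\|_{L^2(\Gamma)}^2 = \frac{4\pi r_2^2}{|Y_p|}\, \|e_i^{\mathrm{low}}(Y_p)\|_2^2.
\]
Meanwhile, the multipole structure tells us that for $|y|\geq r_2$ the degree-$n$ angular component of $e_i^{\mathrm{low}}(y)$ equals $(r_2/|y|)^{n+1}$ times the corresponding component on $\Gamma$ and is therefore bounded in modulus by the $\Gamma$ component. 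Combining this with the addition-theorem bound $\sum_m |Y_n^m(\hat{y})|^2 = (2n+1)/(4\pi)$ to pass from an $L^2(\Gamma)$-norm of each degree-$n$ component back to a pointwise value, and a Cauchy--Schwarz over the degrees $n = 0, \dots, c$ using $\sum_{n=0}^{c}(2n+1) = (c+1)^2$, I expect the factors of $r_2$ to cancel and to arrive at $|e_i^{\mathrm{low}}(y)| \leq (c+1)\,\|e_i^{\mathrm{low}}(Y_p)\|_2/\sqrt{|Y_p|}$.

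Finally, a triangle inequality $\|e_i^{\mathrm{low}}(Y_p)\|_2 \leq \|e_i(Y_p)\|_2 + \|e_i^{\mathrm{high}}(Y_p)\|_2$, together with the earlier discrete high-frequency estimate, and a second triangle inequality $|e_i(y)|\leq |e_i^{\mathrm{low}}(y)| + |e_i^{\mathrm{high}}(y)|$, combine the two bounds into exactly the stated estimate, with the $(c+1)+1 = c+2$ factor emerging naturally. I anticipate the main technical obstacle to be the clean bookkeeping of constants in the low-frequency step: keeping track of all factors of $r_2$ and $4\pi$ coming from the spherical area element, the addition theorem, and the $(c+1)$-term Cauchy--Schwarz sum, so that they cancel correctly when chained with the quadrature identity. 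Beyond that, the argument is essentially multipole truncation combined with a quadrature-based Marcinkiewicz--Zygmund type inequality specialised to spherical harmonics.
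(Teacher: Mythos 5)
Your argument is correct and is essentially the paper's proof: your low/high split is exactly the paper's truncated multipole expansion plus remainder $R_c$, your discrete Parseval identity is the paper's observation that quadrature exactness up to degree $2c$ makes the scaled matrix $\sqrt{4\pi/|Y_p|}\,M$ of sampled spherical harmonics have orthonormal columns, and the addition theorem supplies the same $(c+1)$ factor before the identical triangle-inequality bookkeeping yields $(c+2)$. The only real difference is that you pass from $\Gamma$ to all of $\mathcal{Y}$ via the explicit $(r_2/|y|)^{n+1}$ decay of each multipole component, whereas the paper simply invokes the maximum principle for the harmonic function $e_i$ on the exterior domain and then works entirely on $\Gamma$.
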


\begin{proof}
For any $x \in \mathcal{X}$, $K(x,y)$ as a function of $y$ is harmonic in $\mathcal{Y}$. 
Since $e_i(y)$ is a linear combination of $K(x_i, y)$ and $\{K(x_j, y): x_j \in X_\text{rep}\}$, 
it is also harmonic in $\mathcal{Y}$. By the maximum principle of harmonic functions, 
$e_i(y)$ satisfies 
\begin{equation}\label{eqn:maximum_principle}
\max_{y \in \mathcal{Y}}  | e_i (y) | = \max_{y \in \Gamma} | e_i (y) | .
\end{equation}
Thus, it suffices to prove the upper bound \cref{eqn:bound} for $y \in \Gamma$. 

The multipole expansion of $K(x,y)$ with $(x, y)\in \mathcal{X}\times\Gamma$ 
is written as	
\begin{equation}\label{eqn:multipole}
  K (x, y) = \sum_{l = 0}^{\infty} \sum_{m = - l}^l M_l^m (x) \frac{1}{r_2^{l + 1}} 
  Y_l^m (\alpha, \beta),
\end{equation}
where $(r_2, \alpha, \beta)$ denotes the polar coordinates of $y$ on $\Gamma$, 
$\{Y_l^m (\alpha, \beta) \}$ is the set of spherical harmonics and 
$\left\{M_l^m(x)\right\}$ is a set of known analytic functions of $x$.
Truncating the above infinite sum at index $c$, the remainder can be bounded 
as
\begin{equation*}
  \left| K (x, y) - \sum_{l = 0}^c \sum_{m = - l}^l M_l^m (x) 
  \frac{1}{r_2^{l + 1}} Y_l^m (\alpha, \beta) \right| 
  \leqslant \frac{1}{r_2 - r_1} \left( \frac{r_1}{r_2} \right)^{c + 1} .
\end{equation*}

Using the above multipole expansion, $e_i(y)$ on $\Gamma$ can be written as 
\begin{align}
  e_i (y) 
  & = \sum_{l = 0}^{\infty} \sum_{m = -l}^l \left( M_l^m (x_i) - u_i^T
  M_l^m (X_\text{rep})\right) \frac{1}{r_2^{l + 1}} Y_l^m (\alpha, \beta) \nonumber \\
  & = \sum_{l = 0}^c \sum_{m = - l}^l E_l^m Y_l^m (\alpha, \beta) + R_c (y),  \label{eqn:multipole_ei}
\end{align}
where $E_l^m$ denotes the coefficient collected for $Y_l^m(\alpha,\beta)$ and
the remainder $R_c(y)$ can be bounded as
\begin{equation*}
  | R_c (y) | \leqslant \frac{(1 + | X_\text{rep} | \| u_i \|_{\infty})}{r_2
  - r_1} \left( \frac{r_1}{r_2} \right)^{c + 1},
\end{equation*}
using the triangle inequality. 
Since $\{ Y_l^m (\alpha, \beta) \}$ is an orthonormal function set on the unit sphere $\mathbb{S}^2$, 
the coefficients $E_l^m$ can be analytically calculated as
\begin{equation*}
  E_l^m = \int_{\mathbb{S}^2} e_i(r_2 y) Y_l^m (y) \mathrm{d} y = \dfrac{1}{r_2^2}\int_{\Gamma} e_i(y) Y_l^m (y) \mathrm{d} y,
\end{equation*}
where $Y_l^m (y)$ is defined as $Y_l^m (\alpha, \beta)$ for any $y = (| y |,
\alpha, \beta)$.

Since numerical quadrature with the points in $Y_p$ and equal weights $\frac{4\pi r_2^2}{|Y_p|}$
is exact for polynomials on $\Gamma$ of degree up to $2c$, $E_l^m$ with $l \leqslant c$ can be 
further represented as
\begin{align}
  E_l^m 
  = & \frac{1}{r_2^2} \int_{\Gamma} (e_i (y) - R_c (y)) Y_l^m (y) \mathrm{d} y   \nonumber\\
  = & \frac{4 \pi}{|Y_p|} \sum_{y_j \in Y_p} (e_i (y_j) - R_c (y_j)) Y_l^m (y_j) \nonumber\\
  = & \frac{4 \pi}{|Y_p|} \left(e_i (Y_p) - R_c (Y_p)\right)^T Y_l^m (Y_p). \label{eqn:coef}
\end{align}
Substituting this $E_l^m$ into \cref{eqn:multipole_ei}, $e_i (y)$ on $\Gamma$ can be written as,

\begin{align}
  e_i (y) 
  = & \frac{4\pi}{|Y_p|} \left(e_i (Y_p)\! - R_c (Y_p)\right)^T\!
  \left(Y_0^0 (Y_p)\ Y_1^{-1}(Y_p)\  \ldots \ Y_c^c(Y_p)\right)\! 
  \left( \begin{array}{c}
    Y_0^0 (y)\\
    Y_1^{- 1} (y)\\
    \vdots\\
    Y_c^c (y)
  \end{array} \right)\! + R_c (y) \nonumber \\
  = & \frac{4 \pi}{|Y_p|} \left(e_i (Y_p) - R_c (Y_p)\right)^T M \Phi (y) + R_c (y), \label{eqn:form_ei}
\end{align}
where $M$ denotes the middle matrix and $\Phi (y)$ denotes the last
vector function of $y$ in the first equation.
Note that any two distinct columns of $M$, say $Y_{l_1}^{m_1} (Y_p)$ and $Y_{l_2}^{m_2}(Y_p)$, are orthogonal with
\begin{equation*}
  Y_{l_1}^{m_1} (Y_p)^T Y_{l_2}^{m_2} (Y_p) 
  = \frac{|Y_p|}{4 \pi} \int_{\mathbb{S}^2} Y_{l_1}^{m_1} (y) Y_{l_2}^{m_2} (y) \mathrm{d} y 
  = \frac{|Y_p|}{4 \pi} \delta_{l_1 = l_2, m_1 = m_2},
\end{equation*}
and thus the scaled matrix $\sqrt{\frac{4 \pi}{|Y_p|}} M$ has orthonormal columns. Therefore, it holds that
\begin{equation*}
\sqrt{\frac{4\pi}{|Y_p|}} \left\| M \Phi(y)\right\|_2 = \|\Phi(y)\|_2.
\end{equation*}
%\begin{align*}
%  \sqrt{\frac{4\pi}{|Y_p|}} \left\| e_i (Y_p)^T M \right\|_2 & \leqslant \| e_i(Y_p) \|_2, \\
%  \sqrt{\frac{4\pi}{|Y_p|}} \left\| R_c (Y_p)^T M \right\|_2 & \leqslant \| R_c (Y_p) \|_2 
%    \leqslant \sqrt{|Y_p|} \frac{(1 + | X_\text{rep} | \| u_i
%  	\|_{\infty})}{r_2 - r_1} \left( \frac{r_1}{r_2} \right)^{c + 1} .
%\end{align*}
Meanwhile, by the property of spherical harmonics, the 2-norm of the
vector function $\Phi (y)$ at any $y \in \Gamma$ is
\begin{equation*}
  \| \Phi (y) \|_2 = \sqrt{\sum_{l = 0}^c \sum_{m = - l}^l | Y_l^m (y) |^2}
  = \sqrt{\sum_{l = 0}^c \frac{2 l + 1}{4 \pi}} = \frac{c + 1}{\sqrt{4 \pi}} .
\end{equation*}
Based on \cref{eqn:form_ei}, we can obtain the final upper bound by using the Cauchy-Schwarz 
and triangle inequalities as follows,
\begin{align}
  | e_i (y) |
  & \leqslant \left| \frac{4 \pi}{|Y_p|} e_i (Y_p)^T M \Phi(y)
    \right| + \left| \frac{4 \pi}{|Y_p|} R_c(Y_p)^T M \Phi(y) \right| + |R_c(y)| \nonumber\\
  & \leqslant \frac{4 \pi}{|Y_p|} \| e_i (Y_p) \|_2 \| M \Phi (y) \|_2 +
    \frac{4 \pi}{|Y_p|} \| R_c (Y_p) \|_2 \| M \Phi (y) \|_2 + | R_c (y) |  \nonumber\\
%  & \leqslant \frac{4 \pi}{|Y_p|} \| e_i (Y_p)^TM \|_2 \|\Phi (y) \|_2 +
%  \frac{4 \pi}{|Y_p|} \| R_c (Y_p)^TM \|_2 \| \Phi (y) \|_2 + | R_c (y) |  \nonumber\\  
  & \leqslant (c + 1) \frac{\| e_i (Y_p) \|_2}{\sqrt{|Y_p|}} + (c + 1)
  \frac{\| R_c (Y_p) \|_2}{\sqrt{|Y_p|}} + | R_c (y) | \nonumber\\
  & \leqslant (c + 1) \frac{\| e_i (Y_p) \|_2}{\sqrt{|Y_p|}} + (c + 2) \frac{(1 +
  | X_\text{rep} | \| u_i \|_{\infty})}{r_2 - r_1} \left( \frac{r_1}{r_2}
  \right)^{c + 1} .\label{eqn:bound2}
\end{align}

\end{proof}

Combining \cref{prop} and inequality \cref{eqn:maxbound}, the error bound 
of the proxy surface method for the ID approximation of $K(X_0, Y_0)$ is described 
as follows. 
\begin{theorem}[Error bound for the proxy surface method]\label{theorem}
If point set $Y_p$ satisfies the condition in \cref{prop} and the ID \cref{eqn:proxy} 
of $K(X_0, Y_p)$ has precision $\varepsilon\sqrt{|Y_p|}$, i.e., 
$\|e_i(Y_p)\|_2 \leqslant \varepsilon\sqrt{|Y_p|}$ for each $x_i \in X_0$, 
the ID \cref{eqn:target} of $K(X_0, Y_0)$ in the proxy surface method has 
error $e_i(Y_0)$ at the $i$th row bounded as
%	\begin{equation}\label{eqn:boundvector}
%	\|e_i(Y_0)\|_2 \leqslant (c + 1) \sqrt{|Y_0|}\varepsilon + 
%	(c + 2) \sqrt{|Y_0|} \frac{(1 + | X_\textnormal{rep} | \| u_i \|_{\infty})}{r_2 - r_1}
%	\left( \frac{r_1}{r_2} \right)^{c + 1}. 
%	\end{equation}
	\begin{align}
	\dfrac{\|e_i(Y_0)\|_2}{\sqrt{|Y_0|}} 
	&\leqslant (c+1) \dfrac{\|e_i(Y_p)\|_2}{\sqrt{|Y_p|}} + (c+2) \frac{(1+|X_\textnormal{rep}| \|u_i \|_{\infty})}{r_2-r_1} \left( \frac{r_1}{r_2} \right)^{c + 1} \label{eqn:boundvector1} \\
	&\leqslant (c+1) \varepsilon + (c+2) \frac{(1+|X_\textnormal{rep}| \|u_i \|_{\infty})}{r_2-r_1} \left( \frac{r_1}{r_2} \right)^{c + 1}. \label{eqn:boundvector}
	\end{align}
\end{theorem}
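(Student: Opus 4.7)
The plan is to assemble the theorem directly from \cref{prop} and the trivial inequality \cref{eqn:maxbound}, since the hard analytic work has already been done in the proposition. The key observation is that the right-hand side of \cref{eqn:bound} is independent of $y$, so it acts as a uniform upper bound on $|e_i(y)|$ across the entire far-field domain $\mathcal{Y}$. Since the theorem must hold for an \emph{arbitrary} point distribution of $Y_0$ in $\mathcal{Y}$, the best generic bound that does not exploit any structure of $Y_0$ is the one already recorded in \cref{eqn:maxbound}, which converts a pointwise uniform bound into a Frobenius-type row bound scaled by $\sqrt{|Y_0|}$.

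The first step will be to apply \cref{eqn:maxbound} to write
\[
  \|e_i(Y_0)\|_2 \leqslant \sqrt{|Y_0|} \max_{y\in\mathcal{Y}} |e_i(y)|.
\]
The second step will be to substitute the uniform bound on $|e_i(y)|$ from \cref{prop}. Because the right-hand side of \cref{eqn:bound} does not depend on $y$, taking $\max_{y\in\mathcal{Y}}$ over the left-hand side is trivially absorbed, and dividing through by $\sqrt{|Y_0|}$ yields inequality \cref{eqn:boundvector1} directly.

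The third step will be to pass from \cref{eqn:boundvector1} to \cref{eqn:boundvector} by inserting the hypothesized ID precision $\|e_i(Y_p)\|_2 \leqslant \varepsilon\sqrt{|Y_p|}$ into the first term on the right-hand side, which cancels the normalizing factor $1/\sqrt{|Y_p|}$ and leaves $(c+1)\varepsilon$.

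I do not anticipate any real obstacle: the theorem is a clean corollary that packages \cref{prop} into a matrix-norm statement applicable to the ID in \cref{eqn:target}. The only small points to state carefully are that \cref{prop}'s bound is valid for \emph{every} $y\in\mathcal{Y}$ (so the maximum over $\mathcal{Y}$ is well-defined and bounded by the same expression) and that the bound applies row-by-row for each $x_i\in X_0$, so that combining rows into the full matrix approximation is immediate. No additional harmonic analysis or quadrature argument is needed beyond what has already been invoked in proving \cref{prop}.
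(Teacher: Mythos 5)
Your proposal is correct and matches the paper's own derivation exactly: the paper obtains \cref{theorem} by combining \cref{prop} with inequality \cref{eqn:maxbound} and then inserting the assumed ID precision $\|e_i(Y_p)\|_2 \leqslant \varepsilon\sqrt{|Y_p|}$. No further comment is needed.
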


When there are not enough points in $Y_p$, i.e., $c$ is small, the error is dominated by 
the second term of the upper bound in \cref{eqn:boundvector} which comes from the truncation error $R_c(y)$
in \cref{eqn:multipole_ei}.
A simple interpretation is that controlling the values of $e_i(y)$ for $y \in Y_p$ through 
the ID approximation of $K(X_0, Y_p)$ is not sufficient to completely control $e_i(y)$ over the whole 
surface $\Gamma$. 

%On the other hand, when $c$ is sufficiently large, $R_c(y)$ exponentially decays to zero and the first term in 
%\cref{eqn:boundvector} dominates the error.
%Since $e_i(y)$ is always a linear combination of $\{K(x_j, y)\}_{x_j \in X_0}$ with bounded combination coefficients, 
%the coefficients $\{E_c^m\}_{-c\leqslant m \leqslant c}$ and the truncation error $R_c(y)$ in \cref{eqn:multipole_ei} both
% decay exponentially to zero with increasing $c$. 
%For a large $c$ such that $R_c(y)$ and $\{E_l^m\}_{l > c}$ are of machine precision scale, 
%$\frac{4\pi}{|Y_p|}e_i(Y_p)^TM$ accurately approximates the vector $(E_l^m)_{l,m}$ with $l$ up to $c$ as discussed in \cref{eqn:coef} 
%and \cref{eqn:form_ei}. 
%Thus, the Cauchy-Schwarz inequality $|e_i(Y_p)^T M \Phi(y)| \leqslant \|e_i(Y_p)^T M\|_2 \|\Phi(y)\|_2$ used in \cref{eqn:bound2} will
%not be tight if further increase $c$ since the tail entries of $\frac{4\pi}{|Y_p|}e_i(Y_p)^T M$ will become close to zero. 
%Furthermore, it means that when $c$ is sufficiently large, the first term in the upper bound \cref{eqn:boundvector} can be replaced by 
%a constant that is of scale $O(\sqrt{|Y_0|}\varepsilon)$ but unrelated to $c$. 

\section{Selection of $Y_p$}\label{sec:Yp}
Using the quadrature point sets provided in \cite{womersley2017efficient}, only $2c^2 + 2c + O(1)$ points are 
needed in $Y_p$ to make the associated numerical quadrature exact for polynomials on $\Gamma$ of degree up to $2c$.
Thus, the key for the selection of $Y_p$ is to decide the smallest $c$ for a given error threshold to balance 
the precision and efficiency of the proxy surface method.

Since the error bound \cref{eqn:boundvector} contains $|X_\text{rep}|$ and $\| u_i \|_{\infty}$
that depend on the ID of $K(X_0, Y_p)$, we need some a priori bounds of these two 
quantities for the selection of $Y_p$. When using sRRQR to find the ID of $K (X_0, Y_p)$,
entries of $U$ can be bounded by a pre-specified parameter $C_\text{qr} \geqslant 1$ and thus 
$\| u_i \|_{\infty} \leqslant C_\text{qr}$ for any $x_i \in X_0$. 
$|X_\text{rep}|$ is a rank estimate of $K(X_0, Y_p)$ and thus satisfies
$|X_\text{rep}| \leqslant \min(|X_0|, |Y_p|)$.
Plugging these values into \cref{eqn:boundvector}, we obtain an a priori error bound as
\[
%  \| e_i (Y_0) \|_2 \leqslant (c + 1) \sqrt{|Y_0|}\varepsilon + (c + 2) \sqrt{|Y_0|}
%  \frac{C_\text{qr}\min\left(|X_0|, |Y_p|\right) + 1}{r_2 - r_1} \left(\frac{r_1}{r_2}\right)^{c+1}.
  \dfrac{\| e_i (Y_0) \|_2}{\sqrt{|Y_0|}} \leqslant (c + 1) \varepsilon + (c + 2)
  \frac{C_\text{qr}\min\left(|X_0|, |Y_p|\right) + 1}{r_2 - r_1} \left(\frac{r_1}{r_2}\right)^{c+1}.
\]

Heuristically, we choose the integer constant $c$ by making the second term above of scale $(c+1)\varepsilon$, i.e.,
\begin{equation}\label{eqn:decide_c}
\frac{C_\text{qr}\min\left(|X_0|, 2c^2+2c+O(1)\right) + 1}{r_2 - r_1} \left(\frac{r_1}{r_2}\right)^{c+1}
\approx \varepsilon.
\end{equation}
$Y_p$ can then be directly obtained from the dataset of \cite{womersley2017efficient} with the selected $c$.
The row approximation error of the proxy surface method is then bounded as 
\begin{equation}\label{eqn:bound3}
\|e_i(Y_0)\|_2 \leqslant (2c+3)\varepsilon\sqrt{|Y_0|}.
\end{equation}

It is worth noting that the condition for $Y_p$ in \cref{prop} is mainly for a 
rigorous analysis and also that the obtained upper bounds \cref{eqn:boundvector} and
\cref{eqn:bound3} may not be tight. 
Thus, the above selection of $Y_p$ is conservative and may be unnecessarily large. 
However, the key idea conveyed by \cref{theorem} and the above selection of $Y_p$ is that 
as long as $\Gamma$ and $\mathcal{X}$ are well-separated, e.g., $r_2 - r_1 \geqslant 1$, and 
the ratio of their radii is fixed, $Y_p$ with a constant number of points is sufficient to
maintain the accuracy of the proxy surface method. 
Also, this theorem rigorously justifies the claim in \cite{ho_fast_2012} about using a 
constant number of points to discretize different proxy surfaces in recursive skeletonization.

%%%%%%%%%%%%%%%%%%%%%%%%%%%%%%%%%%%%%%%%%%%%%%%%%%%%%%%%%%%%%%%%%%%%%%%%%%%%%%%%%%%%%%%%%
%%%%%%%%%%%%%%%%%%%%%%%%%%%%%%%%%%%%%%%%%%%%%%%%%%%%%%%%%%%%%%%%%%%%%%%%%%%%%%%%%%%%%%%%%
%%%%%%%%%%%%%%%%%%%%%%%%%%%%%%%%%%%%%%%%%%%%%%%%%%%%%%%%%%%%%%%%%%%%%%%%%%%%%%%%%%%%%%%%%

\section{Numerical experiments}

We consider the 3D Laplace kernel $K (x, y) = 1 / | x - y |$. The error threshold for the ID 
approximation of $K (X_0, Y_p)$ is set as $\varepsilon\sqrt{| Y_p |}$ so that 
$\| e_i (Y_p) \|_2  \leqslant \varepsilon \sqrt{| Y_p |} $ for each $x_i \in X_0$ 
with $\varepsilon$ specified later. 
The entry-bound parameter $C_\text{qr}$ for sRRQR in the ID approximation of $K(X_0, Y_p)$ 
is set to $2$. 
%Unless otherwise specified, in the following tests, we consider the 
%domain pair $\mathcal{X} = B(0, 1)$ and $\mathcal{Y} = \mathbb{R}^3\backslash B(0, 2)$ and error
%threshold $\varepsilon = 10^{-6}$. The corresponding constant $c$ estimated by 
%\cref{eqn:decide_c} is 30 and the selected $Y_p$ from \cite{womersley2017efficient} has 1862
%points.

\begin{figure}[t]
	\centering
	\subfloat[]{
		\includegraphics[width=0.49\textwidth]{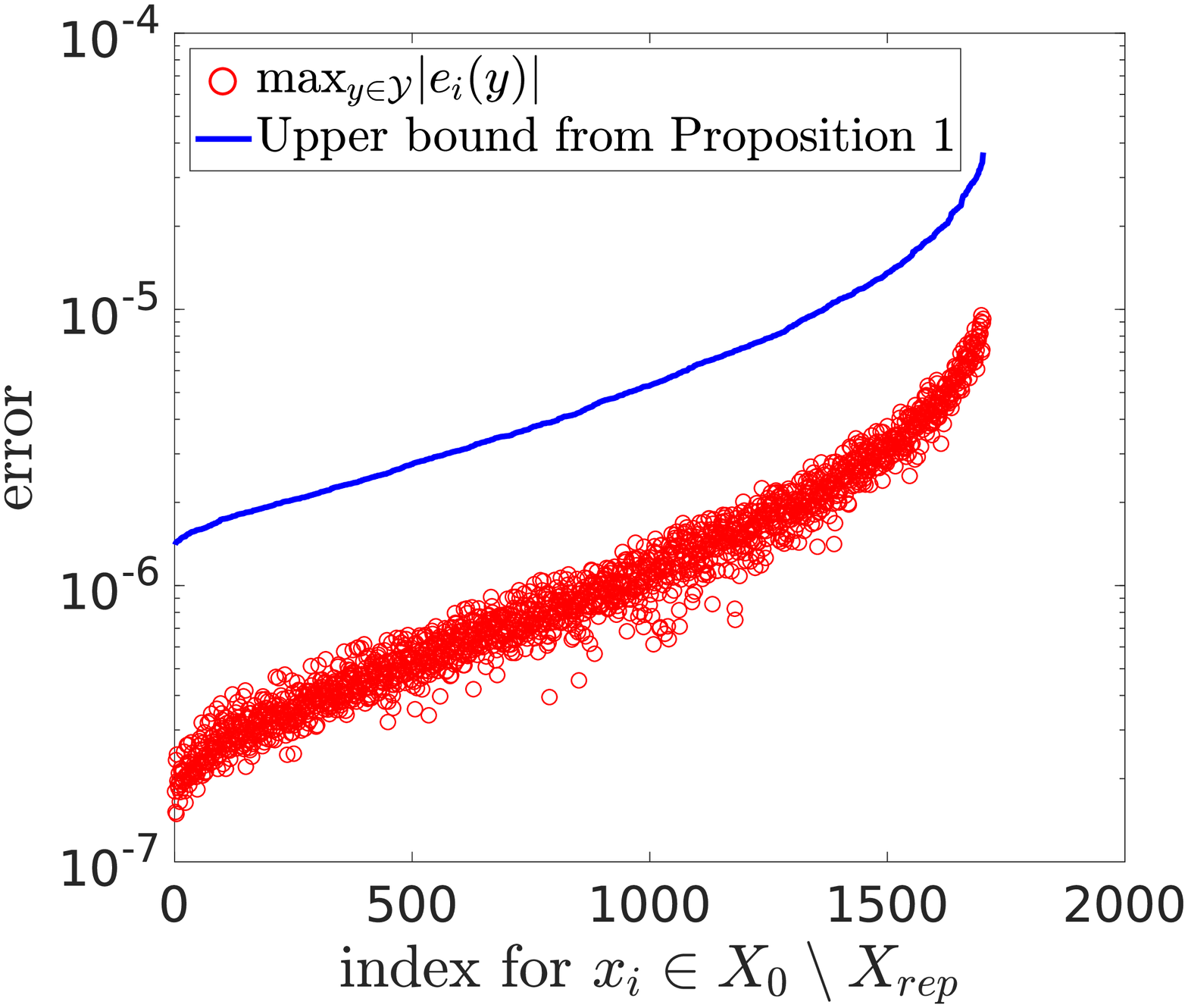}
	}
	\subfloat[]{
		\includegraphics[width=0.49\textwidth]{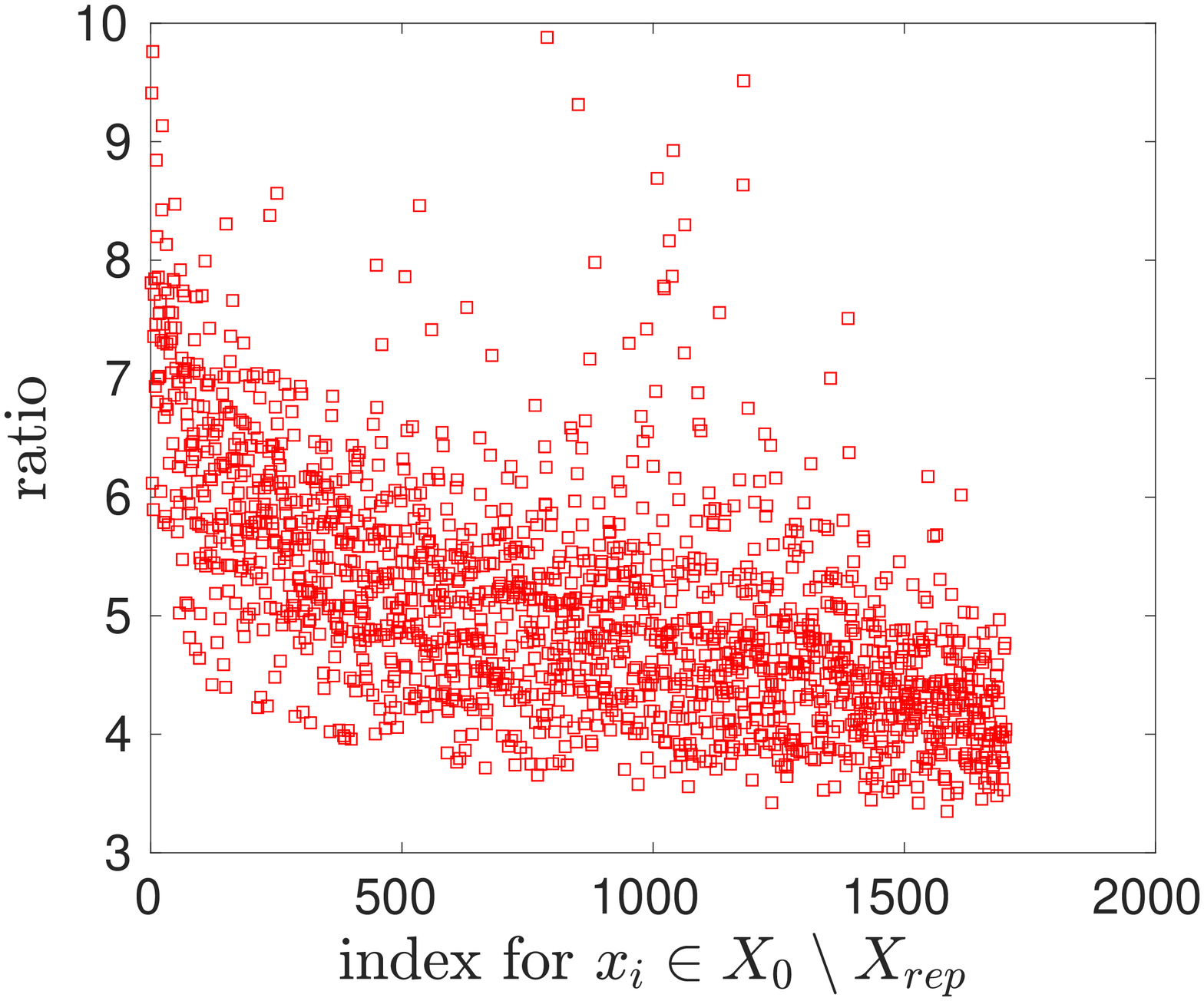}	
	}
	\caption{
		Values of $\max_{y \in \mathcal{Y}} |e_i(y)|$ and its upper bound given in 
		\cref{eqn:bound} for each $x_i \in X_0 \backslash X_\text{rep}$ with (a) values of the 
		two quantities and (b) ratio of the upper bound to $\max_{y \in \mathcal{Y}} |e_i(y)|$. 
		Indices for $x_i \in X_0 \backslash X_\text{rep}$ are sorted so that the upper bounds
		are in ascending order.\label{fig:test1}	
	}
\end{figure}

\subsection{Error bound for $e_i(y)$ in \cref{prop}}
Consider domain pair $\mathcal{X}\times \mathcal{Y} = B(0,1) \times (\mathbb{R}^3\backslash B(0, 2))$
and error threshold $\varepsilon = 10^{-6}$. 
The corresponding constant $c$ estimated by \cref{eqn:decide_c} is 30 and $Y_p$ selected from
\cite{womersley2017efficient} has 1862 points.
We randomly and uniformly select 2000 points in $\mathcal{X}$ for $X_0$. 
The ID of $K(X_0, Y_p)$ obtains $X_\text{rep}$ with 298 points and also 
defines $e_i(y)$ for each $x_i \in X_0$.  

To check the error bound \cref{eqn:bound} in \cref{prop}, we plot $\max_{y \in \mathcal{Y}} |e_i(y)|$ 
and its bound in \cref{fig:test1} for each $x_i \in X_0 \backslash X_\text{rep}$
\footnote{For any $x_i \in X_\text{rep}$, $e_i(y)$ is the zero function.}\
where, according to \cref{eqn:maximum_principle}, 
$\max_{y \in \mathcal{Y}} |e_i(y)|$ is estimated by densely sampling $|e_i(y)|$
over $\Gamma$. 
As can be observed, the upper bound in \cref{prop} is usually within an order of magnitude of
$\max_{y \in \mathcal{Y}} |e_i(y)|$ for each $e_i(y)$. 
However, the ratio of these two quantities being always larger than 3 indicates that an even 
sharper upper bound may exist. 
%\footnotetext{For any $x_i \in X_\text{rep}$, $e_i(y)$ is the zero function.}

In a further numerical test, we vary the constant $c$ and thus the corresponding $Y_p$ selected 
from \cite{womersley2017efficient}. For each set of $e_i(y)$ obtained from different $Y_p$, 
in \cref{fig:test2}, we plot $\max_{x_i\in X_0, y\in \mathcal{Y}} |e_i(y)|$ and 
its upper bound derived from \cref{prop}, i.e., 
\begin{equation}\label{eqn:general_bound}
	\max_{x_i\in X_0, y\in \mathcal{Y}} |e_i(y)| 
	\leqslant 
	(c+1)\varepsilon + (c+2) \dfrac{1+\max_i\|u_i\|_\infty|X_\text{rep}|}{r_2-r_1}\left(\dfrac{r_1}{r_2}\right)^{c+1}.
\end{equation}

\begin{figure}[h]
	\centering
	\includegraphics[width=0.70\textwidth]{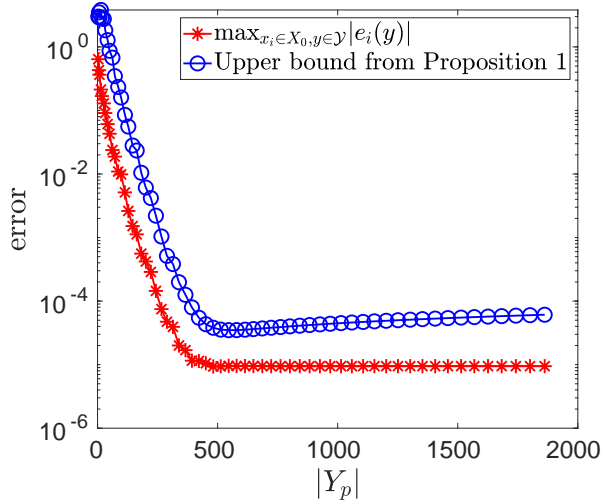}
	\caption{
		Values of $\max_{x_i \in X_0, y \in \mathcal{Y}} |e_i(y)|$ and its upper bound 
		\cref{eqn:general_bound} for different constants $c$ and corresponding different point
		sets $Y_p$ selected from \cite{womersley2017efficient}. 
		The error threshold $\varepsilon \sqrt{|Y_p|}$ with fixed $\varepsilon = 10^{-6}$ is used in the ID approximation of $K(X_0, Y_p)$ for different $Y_p$. 
	}
	\label{fig:test2}
\end{figure}

From the numerical results, the upper bound \cref{eqn:general_bound} is quite tight and it also catches the knee at
$|Y_p|\approx 500$ where $\max_{x_i\in X_0, y\in \mathcal{Y}} |e_i(y)|$ stops decreasing. 
Note that $\max_{x_i\in X_0, y\in \mathcal{Y}} |e_i(y)|$ not further decreasing with larger $|Y_p|$ 
is due to the error threshold $\varepsilon\sqrt{|Y_p|}$ used in the ID approximation of $K(X_0, Y_p)$. 
The knee also shows that approximately 500 points for $Y_p$ should be enough to obtain the lowest 
error for the proxy surface method in this problem setting. 
However, the method of choosing $Y_p$ introduced in \cref{sec:Yp} gives $c = 30$ and $|Y_p| = 1862$.
The main cause of this overestimation of $|Y_p|$, by comparing \cref{eqn:general_bound} and 
\cref{eqn:decide_c}, turns out to be the looseness of 
$|X_\text{rep}| \leqslant \min(|X_0|, |Y_p|)$ utilized in \cref{eqn:decide_c}.

\subsection{Error bound for $\|e_i(Y_0)\|_2$ in \cref{theorem}}
The bound for $\|e_i(Y_0)\|_2$ in \cref{theorem} simply combines the bound for 
$\max_{y \in \mathcal{Y}} |e_i(y)|$ in \cref{prop}, which has been shown in the previous test 
to be quite tight, and the inequality \cref{eqn:maxbound}, i.e., 
$\|e_i(Y_0)\|_2 \leqslant \sqrt{|Y_0|} \max_{y \in \mathcal{Y}} |e_i(y)|$. 
Note that equality of \cref{eqn:maxbound} can hold when $|e_i(y)|$ reaches its maximum 
at all the points in $Y_0$. However, for $Y_0$ with an arbitrary point distribution, this 
inequality turns out to be quite loose as illustrated below.

We use the same $\mathcal{X}\times \mathcal{Y}$, $\varepsilon$ and $X_0\subset \mathcal{X}$ as 
in the previous test and consider the point set $Y_p$ associated with $c=30$. 
We randomly and uniformly select 20000 points for $Y_0$ in two subdomains of $\mathcal{Y}$, 
$B(0,4)\backslash B(0,2)$ and $B(0, 8)\backslash B(0,2)$.
With the proxy surface method, the obtained average entry-wise error $\|e_i(Y_0)\|_2/\sqrt{|Y_0|}$ and 
the maximum entry error $\max_{y \in Y_0}|e_i(y)|$ for each $x_i \in X_0 \backslash X_\text{rep}$ are 
plotted in \cref{fig:test3} along with their shared upper bound \cref{eqn:boundvector1} given in \cref{theorem}. 

\begin{figure}[ht]
	\centering
	\subfloat[$Y_0 \subset B(0,4)\backslash B(0,2)$]{
		\includegraphics[width=0.49\textwidth]{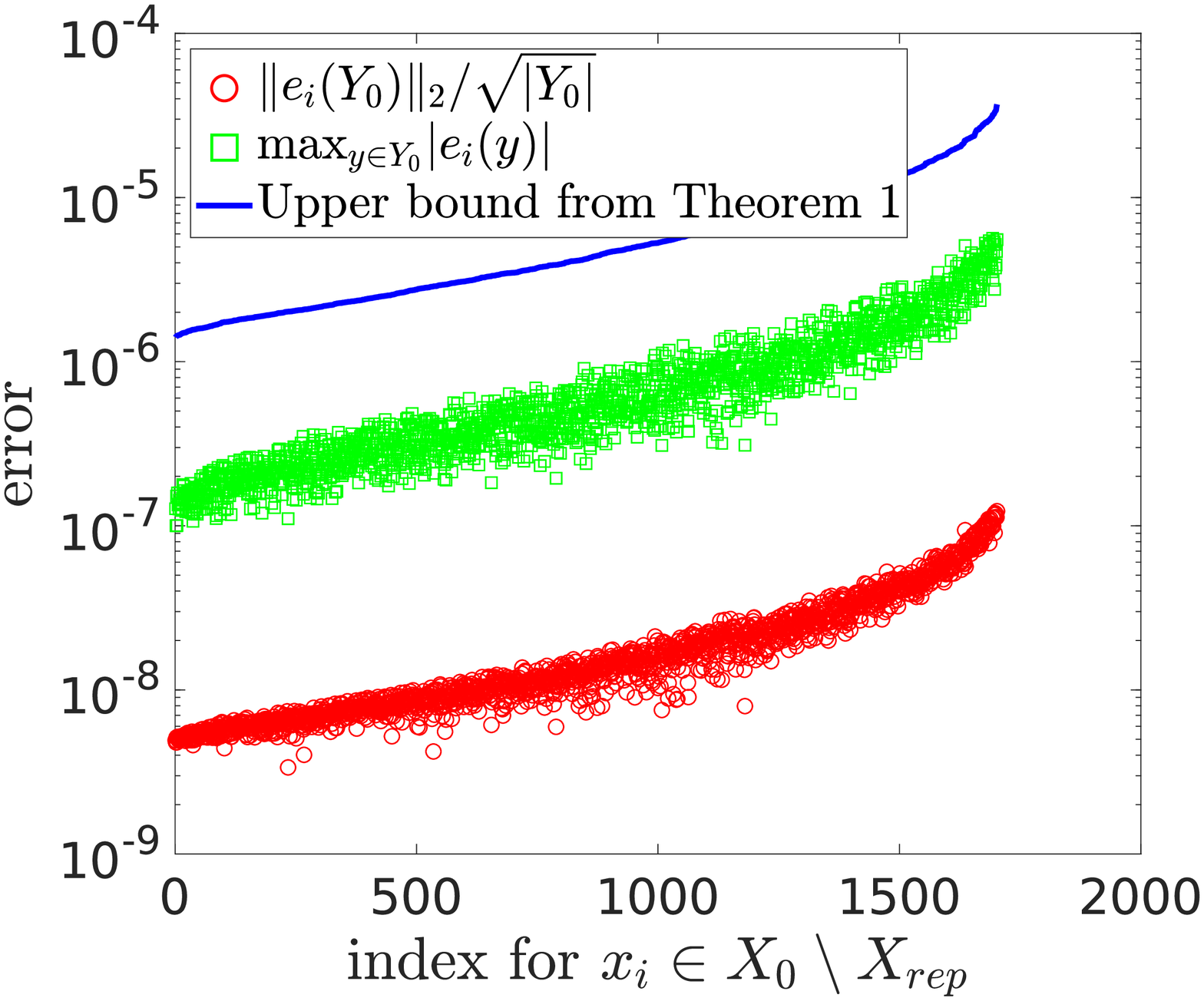}
	}
	\subfloat[$Y_0 \subset B(0,8)\backslash B(0,2)$]{
		\includegraphics[width=0.49\textwidth]{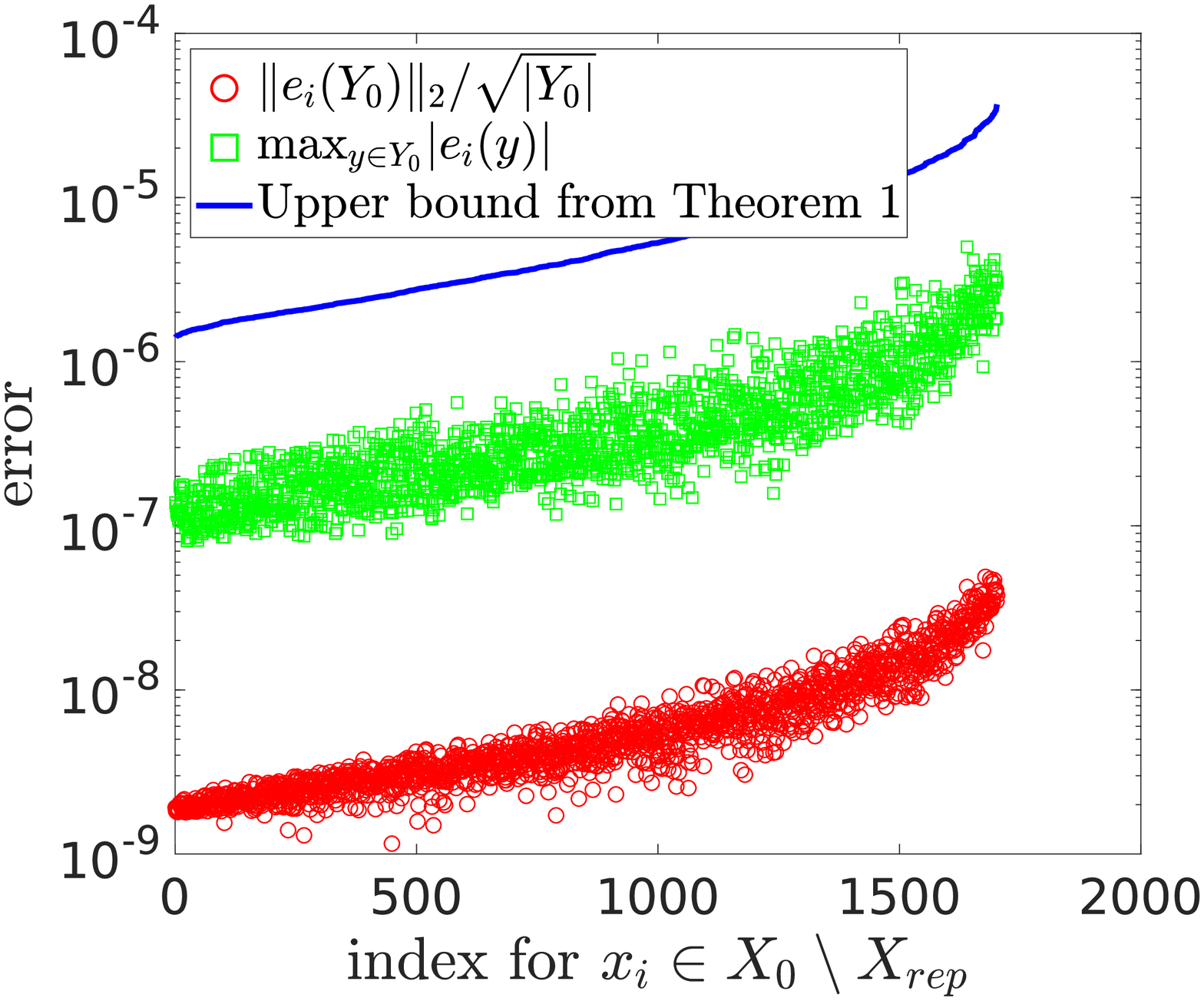}	
	}
	\caption{
		Values of $\|e_i(Y_0)\|_2/\sqrt{|Y_0|}$, $\max_{y \in Y_0} |e_i(y)|$ and their shared upper bound 
		\cref{eqn:boundvector1} given in \cref{theorem} with different point distributions in $Y_0$.
		Indices for $x_i \in X_0 \backslash X_\text{rep}$ are sorted so that the upper bounds
		are in ascending order.\label{fig:test3}	
	}
\end{figure}

For both choices of the subdomain of $\mathcal{Y}$ (and $Y_0$), 
$\|e_i(Y_0)\|_2/\sqrt{|Y_0|}$ is more than one order of magnitude smaller than 
$\max_{y \in Y_0} |e_i(y)|$. 
%is close to $\max_{y \in \mathcal{Y}} |e_i(y)|$ obtained in \cref{fig:test1} while 
Thus, the inequality \cref{eqn:maxbound} is quite loose in these cases. 
%However, no further improvement can be made over this basic estimation since the proxy surface method 
%has no constraint on the point distribution of $Y_0$ except that $Y_0$ has to be in $\mathcal{Y}$. 

\subsection{Selection of $Y_p$}
From \cref{sec:Yp}, the selection of $Y_p$ mainly depends on the domain pair 
$\mathcal{X} \times \mathcal{Y}$ and the ID error threshold $\varepsilon\sqrt{|Y_p|}$. 
Varying these parameters, \cref{tab:setting} lists the number of points in the selected $Y_p$. 
Although our selection scheme is quite conservative as shown in \cref{fig:test2}, 
the results in \cref{tab:setting} clearly show how the selection of $Y_p$ is affected by these parameters.

\begin{table}[H]
	\centering
	\caption{
		Estimated constant $c$ and number of points in selected $Y_p$ under different settings 
		of the radii $r_1$ and $r_2$ for the domain pair 
		$\mathcal{X}\times\mathcal{Y} = B(0, r_1)\times \left(\mathbb{R}^3 \backslash B(0, r_2)\right)$ and 
		$\varepsilon$ in the ID error threshold $\varepsilon\sqrt{|Y_p|}$. 
		\label{tab:setting}
	}
	
	\begin{tabular}{ccccccc}
		\toprule
		& $r_1$ & $r_2$ & $r_1/r_2$ & $\varepsilon$ & $c$ & $|Y_p|$ \\
		\midrule
		reference test & 1  & 2  & 0.5  & $10^{-6}$ & 30 & 1862 \\
		\multirow{2}{*}{different $\varepsilon$} 
					   & 1  & 2  & 0.5  & $10^{-4}$ & 23 & 1106 \\
					   & 1  & 2  & 0.5  & $10^{-8}$ & 38 & 2965 \\
	    \multirow{2}{*}{different $\dfrac{r_1}{r_2}$} 
					   & 1  & 4  & 0.25  & $10^{-6}$ & 12 & 314 \\
					   & 1  & 6  & 0.16  & $10^{-6}$ & 9  & 181 \\
        \multirow{2}{*}{different $r_2-r_1$} 
					   & 10  & 20   & 0.5  & $10^{-6}$ & 27 & 1514 \\
					   & 100 & 200  & 0.5  & $10^{-6}$ & 23 & 1106 \\
		\bottomrule
	\end{tabular}
\end{table}

\section{Conclusion}
The error analysis in this paper rigorously confirms the accuracy of the proxy surface method by showing the
quantitative relationship \cref{eqn:boundvector1} between the error of the ID of $K(X_0, Y_0)$ and 
the error of the ID of $K(X_0, Y_p)$.
%The error analysis in this paper shows that the ID of $K(X_0, Y_0)$ defined in the proxy surface method
%has average entry-wise error at each row, i.e., $\|e_i(Y_0)\|_2/\sqrt{|Y_0|}$, of the same scale as the ID of $K(X_0, Y_p)$. 
%Thus, the proxy surface method can indeed achieve good accuracy.
%
%The error analysis in this paper shows the good accuracy of the proxy surface method by estimating
%the quantitative relation between the error of the ID of $K(X_0, Y_p)$ and that of the defined ID of $K(X_0, Y_0)$. 
Also, the analysis justifies the use of a constant number of points to discretize proxy surfaces of different sizes
in the hierarchical matrix construction of 3D Laplace kernel matrices, when the ratio $r_1/r_2$
is constant. 
The same error analysis technique can also be applied to the proxy surface method for more general matrices with entries 
defined by the interactions between two compact charge distributions, e.g., the matrix in the Galerkin method for integral
equations and the electron repulsion integral tensors with Gaussian-type basis functions.

%\nolinenumbers
\bibliographystyle{elsarticle-num}
\bibliography{references}

\end{document}